\documentclass[authoryear,11pt]{elsarticle}
\journal{Statistics and Probability Letters}

\usepackage{amssymb,amsmath,multirow}
\usepackage{setspace}
\doublespacing




\newcommand{\tr}{ {\rm tr} }
\newcommand{\T}{\top}
\newcommand{\bm}[1]{\boldsymbol{#1}}

\newtheorem{proposition}{Proposition}
\newproof{proof}{\bf Proof}


\begin{document}
	%
	
	\begin{frontmatter}
		\title{ A unified approach for covariance matrix estimation under Stein loss}
		\cortext[cor1]{Corresponding author}
		\author{Anis M. Haddouche\corref{cor1}\fnref{label1}}
		\author{Wei Lu \fnref{label2}}
		\affiliation[label1]{organization={INSA Rouen, Normandie Univ, UNIROUEN, UNIHAVRE, INSA Rouen,  LITIS and LMI},
			addressline={avenue de l'Universit\'e, BP 8},
			city={Saint-\'Etienne-du-Rouvray},
			postcode={76801},
			country={France.},
			ead={ Mohamed.haddouche@insa-rouen.fr}}
		\affiliation[label2]{organization={INSA Rouen, Normandie Univ, UNIROUEN, UNIHAVRE, INSA Rouen, LMI},
			addressline={avenue de l'Universit\'e, BP 8},
			city={Saint-\'Etienne-du-Rouvray},
			postcode={76801},
			country={France.},
			ead={ wei.lu@insa-rouen.fr}}

\begin{abstract}
	In this paper, we address the problem of estimating a covariance matrix of a multivariate Gaussian distribution, relative to a Stein loss function, from a decision theoretic point of view. We investigate the case  where the  covariance matrix is invertible and the case when it is non--invertible in a unified approach.	
\end{abstract}

\begin{keyword}
Orthogonally invariant estimators \sep	singular covariance matrix \sep   statistical decision theory  \sep high--dimensional statistics.
	\MSC[2010]  \\ 62H12 \sep  62F10 \sep 62C99.\\
\end{keyword}
\end{frontmatter}

\section{Introduction}\label{Introduction}


Let $\bm{X}$ be an observed $p \times n$  matrix of the form
\begin{align}\label{model}
	\bm{X}=\bm{B}\bm{Z},
\end{align}
where $\bm{B}$ is a $p \times r$ matrix of unknown  parameters, with $r\leq p$, and $\bm{Z}$ is a $r \times n$ random matrix.
Assume that $r$ is known and that the columns of $\bm{Z}$ are identically and independently distributed as the $r$-dimensional multivariate normal distribution $\mathcal N_r({0}_r,\bm{I}_r)$.
Then the columns of $\bm{X}$ are identically and independently distributed from the $p$-dimensional multivariate normal  $\mathcal N_p({0}_p,\bm{\Sigma})$, where $\bm{\Sigma} = \bm{B}\bm{B}^T$ is the unknown $p \times p$  covariance matrix with
\[ {\rm rank}(\bm{\Sigma}) =r \leq p\,.\]
It follows that the $p\times p$ sample covariance matrix $\bm{S} = \bm{X}\bm{X}^T$ has  a singular Wishart distribution (see \cite{Srivastava2003}) such that
\[{\rm rank}(\bm{S})=\min(n,r)= q\leq p\,, \]
with probability one.
We denote in the following by $\bm{S}^{+}$ and $\bm{\Sigma}^{+}$ the Moore-Penrose inverses of $\bm{S}$ and $\bm{\Sigma}$ respectively.

We consider the problem of estimating the covariance matrix $\bm{\Sigma}$ under the Stein type loss function 
\begin{align}\label{Loss_Stein}
	L(\bm{\hat \Sigma},\bm{\Sigma})=\tr({\bm{\Sigma^{+}{\hat \Sigma}}}) - \ln \!\vert \Lambda(\bm{\Sigma^{+}{\hat \Sigma}} )\vert  - q\,,
\end{align}
where $\bm{\hat \Sigma}$ estimates $\bm{\Sigma}$ and $ \Lambda(\bm{\Sigma^{+}{\hat \Sigma}} )$ is the diagonal matrix of  the $q$ positives  eigenvalues of $\bm{\Sigma^{+}{\hat \Sigma}}$. The corresponding risk function is denoted by
\begin{align*}
	R(\bm{\hat \Sigma},\bm{\Sigma})= {E}[L(\bm{\hat \Sigma},\bm{\Sigma})]\,,
\end{align*}
where ${E}(\cdot)$ denotes the expectation with respect to the model \eqref{model}.
Note that the loss function \eqref{Loss_Stein} is an adaptation of the original Stein loss function (see \cite{Stein1977}) to the context of the model \eqref{model} (see \cite{tsukuma2016} for more details).

%
%

The difficulty of covariance estimation is commonly characterized by the ratio $ p/n$. 
The  usual estimators of the form 
\begin{align}\label{Natural_estimators}
	\bm{\hat \Sigma}_a={a}\,\bm{S},\quad \text{with}\quad a>0,
\end{align}
perform poorly when $n,p \to \infty$ with $p/n \to c >0$ (see \cite{Ledoit2004}). Hence, in this situation, alternative estimators are needed. Indeed,
as pointed out by  \cite{James1961a}, the larger (smaller) eigenvalues of $\bm\Sigma$ are overestimated (underestimated) by those estimators. Therefore, a possible approach to derive an improved  estimators is to regularize the eigenvalues of $\bm{\hat \Sigma}_{a}$. 
This fact suggest to consider the class of orthogonally invariant estimators (see \cite{Takemura1984}) in \eqref{O.I.V.estimators} below.

Considering the model \eqref{model}, we deal, in a unified approach, with the following cases.
\begin{enumerate}[$(i)$]
	\item $n<r=p$: $\bm{\Sigma}$ is invertible of ${\rm rank}$ $p$ and $\bm{S}$ is non--invertible of  ${\rm rank}$ $n$;
	\item $r=p\le n$: $\bm{\Sigma}$ and  $\bm{S}$ are invertible;
	\item $r<p\le n$: $\bm{\Sigma}$ and  $\bm{S}$ are non--invertible of rank $r$;
	\item $r\le n<p$: $\bm{\Sigma}$ and  $\bm{S}$ are non--invertible of rank $r$;
	\item $n<r<p$: $\bm{\Sigma}$ and   $\bm{S}$ are  non--invertible of ranks $r$ and $n$ respectively.
\end{enumerate}
The class of orthogonally invariant estimators was considered by various authors. For instance, see  \cite{Stein1977}, \cite{dey1985estimation} and \cite{Haff1980} for the case $(i)$, \cite{Konno2009} and \cite{HADDOUCHE2021104680} for the cases $(i)$ and $(ii)$. See also \cite{ChetelatWells2016} for the cases $(iii)$ and $(iv)$. Recently \cite{tsukuma2016} extend the \cite{Stein1977} estimator to the five possible cases above in a unified approach. Similarly, we extend here the class of \cite{Haff1980}  estimators to the context of the model \eqref{model}.

The rest of this paper is organized as follows. In Section \ref{main.result}, we  derive the improvement result of the proposed estimators over the usual estimators. We study  numerically the behavior of the proposed estimators in Section \ref{numerical.study}.

\section{Main result}\label{main.result}

Improving the class of the natural estimators in \eqref{Natural_estimators} relies on improving the optimal estimator among this class, that is, the one which minimizes the loss function \eqref{Loss_Stein}.
\begin{proposition}[\cite{tsukuma2016}] Under the Stein loss function \eqref{Loss_Stein}, the optimal estimator among the class \eqref{Natural_estimators} 
 is given by
	\begin{align}\label{Optimal_Const_Stein}
		\bm{\hat \Sigma}_{a_{o}}={a_{o}}\bm{S},\quad \text{where}\quad a_o=\frac{1}{m} 
		\quad 
			\text{and} 
			\quad
			m=\max{(n,r)}.
	\end{align}
\end{proposition}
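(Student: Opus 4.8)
The plan is to write the risk $R(\bm{\hat \Sigma}_a,\bm{\Sigma})$ as an explicit function of $a$ and minimize it over $a>0$. First I would isolate the dependence on $a$ in the loss \eqref{Loss_Stein}. Writing $\bm{\hat \Sigma}_a=a\bm{S}$, the matrix $\bm{\Sigma}^{+}\bm{\hat \Sigma}_a=a\,\bm{\Sigma}^{+}\bm{S}$ has exactly $q$ positive eigenvalues, each equal to $a$ times the corresponding positive eigenvalue of $\bm{\Sigma}^{+}\bm{S}$; hence $\ln|\Lambda(a\,\bm{\Sigma}^{+}\bm{S})|=q\ln a+\ln|\Lambda(\bm{\Sigma}^{+}\bm{S})|$, while $\tr(\bm{\Sigma}^{+}a\bm{S})=a\,\tr(\bm{\Sigma}^{+}\bm{S})$. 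Taking expectations then yields
\begin{align*}
	R(\bm{\hat \Sigma}_a,\bm{\Sigma})=a\,E[\tr(\bm{\Sigma}^{+}\bm{S})]-q\ln a-E[\ln|\Lambda(\bm{\Sigma}^{+}\bm{S})|]-q,
\end{align*}
so that only the first two terms depend on $a$.

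Second, I would compute $E[\tr(\bm{\Sigma}^{+}\bm{S})]$. Substituting $\bm{S}=\bm{B}\bm{Z}\bm{Z}^T\bm{B}^T$ and $\bm{\Sigma}=\bm{B}\bm{B}^T$, and using a singular value decomposition $\bm{B}=\bm{U}\bm{D}\bm{V}^T$ with $\bm{U}^T\bm{U}=\bm{I}_r$ and $\bm{D}$ invertible (possible since ${\rm rank}(\bm{\Sigma})=r$ forces $\bm{B}$ to have full column rank), one gets $\bm{\Sigma}^{+}=\bm{U}\bm{D}^{-2}\bm{U}^T$, and the cyclic property of the trace collapses the decomposition to $\tr(\bm{\Sigma}^{+}\bm{S})=\tr(\bm{Z}\bm{Z}^T)$. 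Since the entries of $\bm{Z}$ are i.i.d.\ $\mathcal N(0,1)$, this gives $E[\tr(\bm{\Sigma}^{+}\bm{S})]=rn$. The same factorization shows that the nonzero eigenvalues of $\bm{\Sigma}^{+}\bm{S}$ coincide with the positive eigenvalues of $\bm{Z}\bm{Z}^T$, confirming that there are exactly $q=\min(n,r)$ of them and that $E[\ln|\Lambda(\bm{\Sigma}^{+}\bm{S})|]$ is a finite constant, irrelevant to the optimization over $a$.

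Finally, I would minimize $g(a)=a\,rn-q\ln a$ over $a>0$. Setting $g'(a)=rn-q/a=0$ gives $a=q/(rn)$, and $g''(a)=q/a^{2}>0$ confirms a global minimum. Using the identity $q\,m=\min(n,r)\max(n,r)=nr$, this simplifies to $a_o=q/(rn)=1/m$, which is precisely \eqref{Optimal_Const_Stein}.

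The step I expect to be the main obstacle is the reduction $\tr(\bm{\Sigma}^{+}\bm{S})=\tr(\bm{Z}\bm{Z}^T)$ together with the identification of the $q$ positive eigenvalues, since these must be handled uniformly with the Moore--Penrose inverse across the five rank configurations $(i)$--$(v)$; once the canonical factorization of $\bm{B}$ is in place, everything reduces to an elementary one-variable optimization.
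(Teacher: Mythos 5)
Your proof is correct, but note that the paper itself offers \emph{no} proof of this proposition: it is imported with the citation \cite{tsukuma2016}, so what you have produced is a self-contained replacement for the cited argument rather than a variant of anything in the text. Every step checks out. Since $a>0$, the $q$ positive eigenvalues of $a\,\bm{\Sigma}^{+}\bm{S}$ are $a$ times those of $\bm{\Sigma}^{+}\bm{S}$, so the risk is indeed $a\,{E}[\tr(\bm{\Sigma}^{+}\bm{S})]-q\ln a$ plus a constant in $a$. The factorization $\bm{B}=\bm{U}\bm{D}\bm{V}^{\T}$ with $\bm{D}$ invertible is legitimate because ${\rm rank}(\bm{\Sigma})={\rm rank}(\bm{B})=r$, and it delivers both $\tr(\bm{\Sigma}^{+}\bm{S})=\tr(\bm{Z}\bm{Z}^{\T})$ (with expectation $rn$) and the identification of the nonzero spectrum of $\bm{\Sigma}^{+}\bm{S}$ with that of the Wishart matrix $\bm{Z}\bm{Z}^{\T}$ — this single canonical reduction is what handles the five rank configurations $(i)$--$(v)$ uniformly, and it is essentially the same reduction used in the cited reference. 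The elementary minimization of $g(a)=a\,rn-q\ln a$ then gives $a_o=q/(rn)=1/m$ via the identity $qm=\min(n,r)\max(n,r)=rn$. As a consistency check, your risk expression evaluated at $a_o$ yields
\begin{align*}
	R(\bm{\hat \Sigma}_{a_o},\bm{\Sigma})=-q\ln(a_o)-{E}\big[\ln\vert\Lambda(\bm{\Sigma}^{+}\bm{S})\vert\big],
\end{align*}
which agrees exactly with \eqref{risk.aS.1}, the form the paper invokes later in the proof of Proposition \ref{proposition}. The only point worth making explicit is the finiteness of ${E}\big[\,\big\vert\ln\vert\Lambda(\bm{\Sigma}^{+}\bm{S})\vert\big\vert\,\big]$, which you assert in passing; it follows from the Bartlett decomposition of $\bm{Z}\bm{Z}^{\T}$ and the finiteness of ${E}[\vert\ln\chi^2_k\vert]$, so this is a one-line addition, not a gap.
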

%

%
As mentioned in Section \ref{Introduction}, we consider the class of orthogonnally invariant estimators. Let $\bm{S}=\bm{H}\,\bm{L}\,\bm{H}^{\top}$  be the eigenvalue decomposition of $\bm{S}$
where $\bm{H}$ is a $p\times q $ semi--orthogonal matrix of eigenvectors and $\bm{L}={\rm diag}(l_1,\dots,l_q)$, with $l_1 >,\dots,>l_q$, is the diagonal matrix of the $q$ positive corresponding eigenvalues (see  \cite{KubokawaSrivastava2008} for more details). The class  of orthogonally invariant estimators is of the form
\begin{align}\label{O.I.V.estimators}
	\hat{\bm \Sigma}_{\Psi}
	&= a_{o}\,\big(\bm{S} + \bm{H}\,\bm{L}\,\bm{\Psi}(\bm{L})\,\bm{H}^{\T})  \,
\end{align}
with  $\bm{\Psi}(\bm{L})={\rm diag}(\psi_1(\bm{L}),\dots,\psi_q(\bm{L}))$, where $\psi_i(\bm{L})$ ($i=1,\dots,q$) is a differentiable function of $\bm{L}$.

More precisely, we consider an extension of the class of \cite{Haff1980}  estimators, to the context of the model \eqref{model}, defined as
\begin{align}\label{haff.estimators}
	\hat{\bm{\Sigma}}_{\alpha} = a_{o}\big(\bm{S} + \bm{H}\bm{L}\bm{\Psi}(\bm{L})\bm{H}^{\T}\big) 
	%
	\, \text{with }\,\, \alpha \geq 1 \,,\, b >0
	\,\, \text{and}  \,\,
	\,\bm{\Psi}(\bm{L}) = b\frac{\bm{L}^{-\alpha}}{\tr (\bm{L}^{-\alpha})},
\end{align}
where $a_{o}$ is given in \eqref{Optimal_Const_Stein}.
We give in the following proposition our main result.

\begin{proposition}\label{proposition}
The Haff type estimators in \eqref{haff.estimators} improves over the optimal estimator in \eqref{Optimal_Const_Stein}, under the loss function \eqref{Loss_Stein}, as soon as 
\begin{align*}
	  0 < b \leq  b_o= \frac{2\,(q-1)}{ m- q +1 }\,.
\end{align*}
\end{proposition}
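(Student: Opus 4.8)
The plan is to prove that the risk difference $\Delta(\bm{\Sigma}) = R(\hat{\bm{\Sigma}}_{\alpha},\bm{\Sigma}) - R(\hat{\bm{\Sigma}}_{a_o},\bm{\Sigma})$ is non-positive. First I would expand the loss difference. Since $\hat{\bm{\Sigma}}_{\alpha} = a_o\,\bm{H}\bm{L}(\bm{I}_q+\bm{\Psi}(\bm{L}))\bm{H}^{\T}$ while $\hat{\bm{\Sigma}}_{a_o}=a_o\,\bm{H}\bm{L}\bm{H}^{\T}$, the trace contribution reduces to $a_o\,\tr(\bm{\Sigma}^{+}\bm{H}\bm{L}\bm{\Psi}(\bm{L})\bm{H}^{\T})$. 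For the log-determinant contribution I would use that the non-zero eigenvalues of $\bm{\Sigma}^{+}\hat{\bm{\Sigma}}$ coincide with those of the $q\times q$ matrix $a_o\bm{L}(\bm{I}_q+\bm{\Psi})\bm{H}^{\T}\bm{\Sigma}^{+}\bm{H}$; since the columns of $\bm{H}$ lie in the range of $\bm{\Sigma}$, the common factor $\bm{H}^{\T}\bm{\Sigma}^{+}\bm{H}$ cancels in the ratio, leaving $\ln|\Lambda(\bm{\Sigma}^{+}\hat{\bm{\Sigma}}_{\alpha})| - \ln|\Lambda(\bm{\Sigma}^{+}\hat{\bm{\Sigma}}_{a_o})| = \ln|\bm{I}_q+\bm{\Psi}(\bm{L})| = \sum_{i=1}^q\ln(1+\psi_i(\bm{L}))$. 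This quantity is free of $\bm{\Sigma}$, which is the key algebraic simplification.

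The only $\bm{\Sigma}$-dependent term left after taking expectations is $a_o\,E[\tr(\bm{\Sigma}^{+}\bm{H}\bm{L}\bm{\Psi}\bm{H}^{\T})]$, which I would rewrite as an expectation of a function of $\bm{L}$ alone by invoking a Stein--Haff type identity for the (possibly singular) Wishart distribution, in the spirit of \cite{KubokawaSrivastava2008} and \cite{tsukuma2016}. Writing $g_i=l_i\psi_i$, this identity produces (up to the factor $a_o$) the derivative terms $2\,\partial g_i/\partial l_i$, scale terms whose coefficient, after calibration against $E[\tr(\bm{\Sigma}^{+}a_o\bm{S})]=q$, equals $m-q-1$, and cross terms $\sum_{j\neq i}(g_i-g_j)/(l_i-l_j)$. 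Substituting $\bm{\Psi}(\bm{L})=b\,\bm{L}^{-\alpha}/\tr(\bm{L}^{-\alpha})$ gives $g_i=b\,l_i^{1-\alpha}/\tr(\bm{L}^{-\alpha})$ and $\sum_i\psi_i=b$. I would then verify three facts: the scale terms sum to $(m-q-1)b$; the derivative terms sum to $2b\,[1-\alpha(1-W)]\le 2b$, where $W=\tr(\bm{L}^{-2\alpha})/\tr(\bm{L}^{-\alpha})^2\le 1$ because $\sum_i a_i^2\le(\sum_i a_i)^2$ for non-negative $a_i$; and each cross term is $\le 0$, since $\alpha\ge 1$ makes $l\mapsto l^{1-\alpha}$ non-increasing, so $g_i-g_j$ and $l_i-l_j$ carry opposite signs. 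This last point is exactly where the hypothesis $\alpha\ge 1$ is used.

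Collecting these bounds yields the pointwise estimate $\widehat{\Delta}\le b(m-q+1)/m - \sum_{i=1}^q\ln(1+\psi_i)$ for the unbiased risk-difference estimator. For the remaining sum I would chain two elementary inequalities, namely $\sum_{i=1}^q\ln(1+\psi_i)=\ln\prod_i(1+\psi_i)\ge\ln(1+\sum_i\psi_i)=\ln(1+b)$ and the bound $\ln(1+b)\ge 2b/(2+b)$ valid for $b\ge 0$. This gives $\widehat{\Delta}\le b(m-q+1)/m - 2b/(2+b)$, and clearing denominators shows the right-hand side is non-positive exactly when $(m-q+1)(2+b)\le 2m$, i.e. when $b\le 2(q-1)/(m-q+1)=b_o$; taking expectations preserves the inequality and establishes the improvement.

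The step I expect to be the main obstacle is the rigorous use of the Stein--Haff identity in the unified setting: the five rank configurations mix singular and non-singular $\bm{S}$ and $\bm{\Sigma}$, the matrix $\bm{H}$ of eigenvectors is only semi-orthogonal, and all inverses are Moore--Penrose, so one must establish the identity with the correct constant $m-q-1$ and check the differentiability and integrability conditions on $\bm{\Psi}$. Once the identity is secured, the determinant simplification and the final scalar inequalities are comparatively routine.
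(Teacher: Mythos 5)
Your proposal is correct and follows essentially the same route as the paper: both reduce the trace term via the Stein--Haff type identity of \cite{tsukuma2016} (your form with constant $m-q-1$ acting on $g_i=l_i\psi_i$ is algebraically equivalent, via the product rule, to the paper's form with constant $m-q+1$ acting on $\varphi_i=1+\psi_i$), both kill the cross terms using $\alpha\ge 1$ and the derivative terms using $\tr(\bm{L}^{-2\alpha})\le\tr^2(\bm{L}^{-\alpha})$, and both cancel the $\bm{\Sigma}$-dependent log--determinant factor $\bm{H}^{\T}\bm{\Sigma}^{+}\bm{H}$ before reaching the same bound $2b/(2+b)$ and the same threshold $b_o$. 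Your only deviations are cosmetic --- working directly with the risk difference instead of the two risks separately, and obtaining the bound $2b/(2+b)$ via $\prod_i(1+\psi_i)\ge 1+\sum_i\psi_i$ followed by $\ln(1+b)\ge 2b/(2+b)$ rather than applying $\ln(1+x)\ge 2x/(2+x)$ termwise --- and the ``main obstacle'' you flag (the unified singular/non-singular identity) is exactly what the paper dispatches by citing Lemma A.2 of \cite{tsukuma2016}.
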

\begin{proof}
We aim to show that the risk difference between the Haff type estimators in \eqref{haff.estimators} and  the optimal estimator in \eqref{Optimal_Const_Stein}, namely,
\begin{align}\label{risk.diff}
	\Delta_{(\alpha,a_o)} =R(\bm{\hat \Sigma}_\alpha,\bm{\Sigma}) - R(\bm{\hat \Sigma}_{a_o},\bm{\Sigma}),
\end{align}
is non--positive.
Note that $\bm{\hat \Sigma}_\alpha$ can be written as
\begin{align*}
\hat{\bm{\Sigma}}_{\alpha}= a_{o}\,\bm{H}\bm{L}\bm{\Phi}(\bm{L})\bm{H}^{\T}  \quad \text{with} \quad \bm{\Phi}(\bm{L})= \bm{I}_q + \bm{\Psi}(\bm{L}).
\end{align*}
The risk of these estimators under the Stein loss function  \eqref{Loss_Stein} is given by 
\begin{align}\label{risk.alternative.0}
	R(\hat{\bm{\Sigma}}_{\alpha},\bm{\Sigma}) = {E}\big(\tr(\bm{\Sigma}^{+} \hat{\bm{\Sigma}}_{\alpha})\big)
	 -{E}\big(\!\ln\!\vert \Lambda (\bm{\Sigma}^{+}\hat{\bm{\Sigma}}_{\alpha})\vert\big) - q\,.
\end{align} 

First, dealing with ${E}\big(\tr(\hat{\bm{\Sigma}}_{\alpha}\bm{\Sigma}^{+})\big)$, we apply Lemma A.2 in \cite{tsukuma2016} in order to get rid of the unknown parameter $\bm{\Sigma}^{+}$. It follows that,
\begin{align}\label{risk.alternative.1}
	{E}\big(\tr(\bm{\Sigma}^{+}\hat{\bm{\Sigma}}_{\alpha})\big) 
%
	&=
	a_o{E}\left(\sum_{i=1}^q \!\left\{\!(m-q+1)\varphi_i+2l_i\frac{\partial\varphi_i}{\partial l_i}+2\sum_{j> i}^q\frac{l_i\varphi_i-l_j\varphi_j}{l_i - l_j}\!\right\}\!\right),
\end{align}
where,  for $i=1,\dots,q$, 
\begin{align*}
\phi_i= 1+ b\, \frac{l_i^{-\alpha}} {\tr(L^{-\alpha} )},
\quad 
\frac{\partial\varphi_i}{\partial l_i} = b\,\alpha\,\frac{1-\tr(\bm{L}^{-\alpha})\,l_i^{\alpha}}{\tr^2(\bm{L}^{-\alpha})\,l_i^{1+2\alpha}}
\intertext{and}
 \sum_{j> i}^q
\frac{l_i\varphi_i-l_j\varphi_j}{l_i - l_j}
=\sum_{j> i}^q \left(1+\frac{b}{\tr(\bm{L}^{-\alpha})}\left\{\frac{l_i^{1-\alpha}-l_j^{1-\alpha}}{l_i-l_j} \right\} \right)\,.
\end{align*}
Using the fact, for $j>i$, $l_j > l_i$, it can be shown that 
\begin{align}\label{maj.0}
	 \sum_{j> i}^q
	\frac{l_i\varphi_i-l_j\varphi_j}{l_i - l_i}
	 \leq (q-i)\,.
\end{align}
Therefore, using \eqref{maj.0}, we obtain 
%
\begin{align*}
	{E}(\tr(\bm{\Sigma}^{+}\hat{\bm{\Sigma}}_{\alpha}))
	&\leq
	a_{o}\,{E}\Biggr(\sum_{i=1}^q\Biggr\{(m-q+1)\left(1+b\frac{{l_i}^{-\alpha}}{\tr(\bm{L}^{-\alpha})}\right) \nonumber \\
	&\hspace{2.7cm}
	+2\,b\,\alpha\,\frac{1-\tr(\bm{L}^{-\alpha})\,l_i^{\alpha}}{\tr^2(\bm{L}^{-\alpha})\,l_i^{2\alpha}}+2(q-i)\Biggl\}\Biggl) \nonumber \\
 	&=
   a_o\,m\,q+a_o\,b\,{E}\Biggr(\sum_{i=1}^q\Biggr\{(m-q+1)\frac{{l_i}^{-\alpha}}{\tr(\bm{L}^{-\alpha})}
   \nonumber \\
   &\hspace{5cm}
   +2\,\alpha\frac{1-\tr(\bm{L}^{-\alpha})\,l_i^{\alpha}}{\tr^2(\bm{L}^{-\alpha})\,l_i^{2\alpha}}\Biggl\}\Biggl)
    \nonumber \\
   &=
   a_{o}\left(m\,q+b\,(m-q+1)\right) + 2\,\alpha\,{E} \left(\frac{\tr(\bm{L}^{-2\alpha})}{\tr^2(\bm{L}^{-\alpha})}-1 \right)\,.
\end{align*}
From the submultiplicativity of the trace for semi--definite positive matrices, we have  $\tr(\bm{L}^{-2\alpha})\leq \tr^{2}(\bm{L}^{-\alpha})$. Then, an upper bound for \eqref{risk.alternative.1} is given by
%
\begin{align}\label{risk.alternative.2.bis}
	{E}(\tr(\bm{\Sigma}^{+}\hat{\bm{\Sigma}}_{\alpha}))
	&\leq
	 a_{o}\left(m\,q+b\,(m-q+1)\right)\,.
\end{align}

Secondly, dealing with ${E}\big(\!\ln\vert \Lambda (\hat{\bm{\Sigma}}_{\alpha}\bm{\Sigma}^{+})\vert\big)$ in \eqref{risk.alternative.0}, it can be shown that  
\begin{align*}
\Lambda(\bm{\Sigma}^{+}\hat{\bm{\Sigma}}_{\alpha}) 
= a_{o}\,\Lambda\bigr(\bm{\Sigma}^{+}\bm{H}\bm{L}\bm{\Phi}(\bm{L})\bm{H^{\T}}	\bigl) 
=
a_{o}\,\Lambda\bigr(\bm{L}^{1/2} \bm{H^{\T}} \bm{\Sigma^{+}} \bm{H} \bm{L}^{1/2} \bm{\Phi}(\bm{L})	\bigl).
\end{align*}
Note that  $\bm{L}^{1/2} \bm{H^{\T}} \bm{\Sigma^{+}} \bm{H} \bm{L}^{1/2}$ and $\bm{\Phi}(\bm{L})$ are full rank $q\times q$ matrices. It follows that $$\vert\Lambda(\hat{\bm{\Sigma}}_{\alpha}\bm{\Sigma}^{+})\vert =  a^q_{o}\,\vert\bm{L}^{1/2} \bm{H^{\T}} \bm{\Sigma^{+}} \bm{H} \bm{L}^{1/2} \bm{\Phi}(\bm{L})	\vert .$$ 
Therefore 
\begin{align}\label{risk.alternative4}
	{E}(\ln\vert \Lambda (\hat{\bm{\Sigma}}_{\alpha}\bm{\Sigma}^{+})\vert) 
	&=q\,\ln(a_o)+{E}(\ln\vert \bm{L}^{1/2}\bm{H}^T\bm{\Sigma}^{+}\bm{H}\bm{L}^{1/2}\vert)
	+{E}(\ln\vert\bm{\Phi}(\bm{L})\vert).
\end{align}
Using the fact that  $\ln(1+x) \geq 2\,x / (2+x)$, for any positive constant $x$, then 
\begin{align*}
	\ln\vert \bm{\Phi}(\bm{L}) \vert  
	= \ln \vert \bm{I}_q + \frac{b\,\bm{L}^{-\alpha}}{\tr(\bm{L}^{-\alpha})}\vert  
%
	&=\sum_{i=1}^{q}\ln \left( 1 +  \frac{b\,l^{-\alpha}_i}{\tr(\bm{L}^{-\alpha})}\right) 
	\\
	&\geq
	\sum_{i=1}^{q} \frac{2\,b\,l^{-\alpha}_i / \tr(\bm{L}^{-\alpha} )}{2+ b\,l^{-\alpha}_i / \tr(\bm{L}^{-\alpha} )}.
\end{align*} 
Thus
\begin{align}\label{maj.1}
\ln\vert \bm{\Phi}(\bm{L}) \vert  \geq \frac{2b}{2+b}\,,
\end{align}
since, for $i=1,\dots,q$, $l^{-\alpha}_i \leq \tr(\bm{L}^{-\alpha})$. Consequently, thanks to \eqref{maj.1}, a lower bound for \eqref{risk.alternative4} is given by
\begin{align}\label{risk.alternative3}
{E}(\ln\vert \Lambda (\hat{\bm{\Sigma}}_{\alpha}\bm{\Sigma}^{+})\vert) \ge q\,\ln(a_o)+{E}(\ln\vert \bm{L}^{1/2}\bm{H}^T\bm{\Sigma}^{+}\bm{H}\bm{L}^{1/2}\vert)+\frac{2\,b}{2+b}.
\end{align}

Now, relying on  the proof of Proposition 2.1 in \cite{tsukuma2016},  it can be shown that
\begin{align}\label{risk.aS.1}
	R(\hat{\bm{\Sigma}}_{a_o}, \bm{\Sigma}) = -q\ln(a_o) -  {E}(\ln\!\vert \bm{L}^{1/2}\bm{H}^T\bm{\Sigma}^{+}\bm{H}\bm{L}^{1/2}\vert).
	\end{align}

Finally, combining \eqref{risk.alternative.2.bis}, \eqref{risk.alternative3} and \eqref{risk.aS.1}, an upper bound for the risk difference in \eqref{risk.diff} is given by 
\begin{align*}
	\Delta_{(\alpha,a_o)}
	&\leq
	 a_o\,(m-q+1)\,b - \frac{2\,b}{2+b} 
	=b\,\Bigr(a_o\,(m+q-1)\,(b+2) - 2 \Bigl),
\end{align*}
since $a_{o}=1/m$, which  is non-positive as soon as 
\begin{align*}
	0 < b \leq b_o= \frac{2\,(q-1)}{m-q+1},
\end{align*}
\qed
\end{proof}

\section{Numerical study}\label{numerical.study}
We study here numerically the performance of the proposed estimators of the form
\begin{align}\label{haff.simulation}
	\hat{\bm{\Sigma}}_{\alpha} = a_{o}\big(\bm{S} + \bm{H}\bm{L}\bm{\Psi}(\bm{L})\bm{H}^{\T}\big) 
	\quad 
	\text{with}
	\quad
	\alpha >0,
	\quad
	\bm{\Psi}(\bm{L}) = b_o\frac{\bm{L}^{-\alpha}}{\tr (\bm{L}^{-\alpha})},
\end{align}
where $b_o$ is given in Proposition \ref{proposition}.

We consider the following structures of $\bm{\Sigma}$: $(i)$ the identity matrix $\bm{I}_p$ and $(ii)$ an autoregressive structure with coefficient $0.9$. We set their $p-r$ smallest eigenvalues  to zero in order to construct  matrices of rank $r\leq p$.

To assess the performance of the proposed estimators, we compute  the Percentage Reduction In Average Loss (PRIAL), for some values of $p$, $n$, $r$ and $\alpha$, defined as 
\begin{align*}
	\textrm{PRIAL}(\hat{\bm{\Sigma}}_{\alpha}) = \frac{R(\hat{\bm{\Sigma}}_{a_o},\bm{\Sigma})-{R(\hat{\bm{\Sigma}}_{\alpha},\bm{\Sigma})}}{R(\hat{\bm{\Sigma}}_{a_o},\bm{\Sigma})} \times  100,
\end{align*}
where $\hat{\bm{\Sigma}}_{a_o}$ and $\hat{\bm{\Sigma}}_{\alpha}$ are respectively defined in \eqref{Optimal_Const_Stein} and \eqref{haff.simulation}.

\begin{table}[!htb]
	\centering
	\begin{tabular}{llllllll} 
		$\bm{\Sigma}$                  & $(p,n)$                     & $r$   & $\hat{\bm{\Sigma}}_1$     & $\hat{\bm{\Sigma}}_2$     & $\hat{\bm{\Sigma}}_3$     & $\hat{\bm{\Sigma}}_4$     & $\hat{\bm{\Sigma}}_5$      \\ 
		\hline\hline
		\multirow{10}{*}{$(i)$}  & \multirow{3}{*}{$(30,50)$}  & 10  & 6.85  & 12.45 & 15.53 & 16.95 & 17.53  \\
		&                           & 20  & 9.20  & 13.91 & 14.88 & 14.68 & 12.47  \\
		&                           & 30  & 11.81 & 14.33 & 13.41 & 12.43 & 11.71  \\ 
		\cline{2-8}
		& \multirow{3}{*}{$(50,30)$}  & 20  & 18.31 & 19.65 & 17.75 & 16.44 & 15.63  \\
		&                           & 40  & 17.12 & 16.33 & 14.07 & 12.78 & 12.02  \\
		&                           & 50  & 11.80 & 14.23 & 13.29 & 12.30 & 11.59  \\ 
		\cline{2-8}
		& \multirow{4}{*}{$(150,30)$} & 20  & 18.17 & 19.69 & 17.87 & 16.56 & 15.71  \\
		&                           & 40  & 17.08 & 16.31 & 14.07 & 12.76 & 11.99  \\
		&                           & 60  & 8.88  & 12.27 & 12.33 & 11.75 & 11.19  \\
		&                           & 150 & 2.83  & 5.09  & 6.50  & 7.25  & 7.61   \\ 
		\cline{2-8}
        \hline
		\multirow{10}{*}{$(ii)$} & \multirow{3}{*}{$(30,50)$}  & 10  & 6.06  & 8.70  & 9.62  & 9.89  & 9.92   \\
		&                           & 20  & 8.81  & 11.66 & 12.12 & 11.93 & 11.61  \\
		&                           & 30  & 11.46 & 13.15 & 12.35 & 11.48 & 10.82  \\ 
		\cline{2-8}
		& \multirow{3}{*}{$(50,30)$}  & 20  & 17.18 & 17.45 & 15.85 & 14.76 & 14.07  \\
		&                           & 40  & 16.34 & 15.18 & 13.19 & 12.00 & 11.28  \\
		&                           & 50  & 11.33 & 12.82 & 12.02 & 11.19 & 10.57  \\ 
		\cline{2-8}
		& \multirow{4}{*}{$(150,30)$} & 20  & 17.30 & 18.00 & 16.38 & 15.19 & 14.42  \\
		&                           & 40  & 16.27 & 15.04 & 13.04 & 11.84 & 11.13  \\
		&                           & 60  & 8.48  & 10.43 & 10.29 & 9.81  & 9.35   \\
		&                           & 150 & 2.73  & 4.03  & 4.61  & 4.86  & 4.95   \\ 
		\cline{2-8}
		&                           &     &       &       &       &       &       
	\end{tabular}
	\caption{Effect of $\alpha=1,\dots,5$ on PRIAL's for the structures $(i)$ and $(ii)$ of $\bm \Sigma$.}
	\label{tab 1}
\end{table}

Table \ref{tab 1} shows that the proposed estimators improve over $\hat{\bm{\Sigma}}_{a_o}$ for any possible ordering of $p,n$ and $r$.  Compared to other cases,   the Haff type estimators $\hat{\bm{\Sigma}}_{\alpha}$ (for $\alpha=1,\dots,5$) have  better performances in the case where $p>n>r$, with PRIAL's higher than $14.07 \%$ for both structures $(i)$ and $(ii)$ of $\bm\Sigma$.
We report that  the optimal value of $\alpha$, which maximizes the PRIAL's, depends on $p,n$ and $r$.
%

\newpage

\section*{Acknowledgement}
This research did not receive any specific grant from funding agencies in the public, commercial, or not-for-profit sectors.

\bibliographystyle{elsarticle-num-names} 
\bibliography{Anis_Biblio_2021.bib}
\end{document}